\documentclass[oneside,final]{csri24}

% PACKAGES ---------------------------------------------------------------
\usepackage[top=1.355in,
			bottom=1.355in,
			left=1.5in,
			right=1.5in]{geometry}
\usepackage{amsfonts,
			amsmath,
			graphicx,
			subfigure,
			url}
% You can include additional packages if required
%New Commands
%\usepackage[english]{babel}
%\usepackage{algorithm,algorithmic}
\usepackage{algorithm,algpseudocode}
\usepackage{amssymb}
\usepackage{mathtools}
\usepackage{booktabs}

\usepackage{framed}
% Useful packages
\usepackage{graphicx}
\usepackage{tikz}
\usepackage[colorlinks=true, allcolors=blue]{hyperref}
\newcommand{\Prox}{\mathrm{Prox}}

\newcommand{\Proxrphi}{\mathrm{Prox}_{r\phi}}
\newcommand{\Proxrzerophi}{\mathrm{Prox}_{r_{0}\phi}}

\newcommand{\Argmin}{\operatorname*{arg\,min}}
\newcommand{\R}{\mathbb{R}}
\newcommand{\Rn}{\mathbb{R}^n}
\newcommand{\dx}{\textrm{d}x}

%New Commands
\newtheorem{assumption}{Assumption}[section]
\newcommand{\kgrad}{\kappa_{\mathrm{grad}}}
\newcommand{\kobj}{\kappa_{\mathrm{obj}}}
\newcommand{\kfcd}{\kappa_{\mathrm{fcd}}}
\newcommand{\krad}{\kappa_{\mathrm{rad}}}

\newcommand{\ared}{\mathrm{ared}}
\newcommand{\pred}{\mathrm{pred}}
\newcommand{\cred}{\mathrm{cred}}
\newcommand{\dom}{\mathrm{dom }}

\DeclarePairedDelimiterX{\inp}[2]{\langle}{\rangle}{#1, #2}
\DeclarePairedDelimiterX{\norm}[1]{\|}{\|}{#1}

%%% Page Style
% Do NOT change the page style, margin size, or add anything to the margins,
% header, or footer. This space should be empty in the template and will be filled
% when all reports are combined together for the proceedings.
\pagestyle{empty}
\thispagestyle{plain}

% Put your article title in here
\title{A{N} {Inexact} Weighted Proximal Trust-Region Method$^1$}
% SHORT TITLE: Please also provide a short (~5-8 words) title that will be used in the header

% List each author, their affiliation, and their e-mail address, as shown.
% Use university emails for summer interns that are offboarding, Sandia emails otherwise
\author{Leandro Farias Maia\thanks{Department of Industrial and Systems Engineering, Texas A\&M University,  leandro.maia@tamu.edu} \and Robert Baraldi\thanks{Optimization and Uncertainty Quantification, Sandia National
Laboratories, rjbaral@sandia.gov } \and Drew P. Kouri\thanks{Optimization and Uncertainty Quantification, Sandia National
Laboratories, dpkouri@sandia.gov}}

\begin{document}

\maketitle
\footnotetext[1]{This work was previously published in the Summer Proceedings of the Center for Computing Research at Sandia National Laboratories. The present version is posted on arXiv for broader dissemination.}

% Include your abstract here.
\begin{abstract}
In [R.~J.~Baraldi and D.~P.~Kouri, {\em Math.~Program.}, 201:1 (2023), pp.~559-598], the authors introduced a trust-region method for minimizing the sum of a smooth nonconvex and a nonsmooth convex function, the latter of which has {an} analytical proximity operator. 
While many functions satisfy this criterion, e.g., the $\ell^1$-norm defined on $\ell^2$, many others are precluded by either the topology or the nature of the nonsmooth term.
Using the $\delta$-Fr\'{e}chet subdifferential, we extend the definition of the inexact proximity operator and enable its use within the aforementioned trust-region algorithm. Moreover, we augment %generalize
the analysis for the standard trust-region convergence theory to handle proximity operator inexactness %. To conclude, we restrict our attention to 
with weighted inner products.  We first introduce an algorithm to generate a point in the inexact proximity operator and then apply the algorithm within the trust-region method to solve an optimal control problem constrained by Burgers' equation.

\end{abstract}

\section{Introduction}
Our goal is the efficient numerical solution to the nonsmooth, nonconvex optimization problem
\begin{equation}\label{eq:optprob}
  \min_{x\in X}\; \left\{f(x)+\phi(x)\right\},
\end{equation}
where $X$ is a real Hilbert space, $f:X\to\R$ is a smooth nonconvex function and $\phi:X\to(-\infty,+\infty]$ is a proper, closed and convex function. 
The method developed in \cite{Baraldi2023} solves \eqref{eq:optprob} via a trust-region method that permits inexact evaluations of the smooth objective function $f$ and its gradient.
The convergence analysis for the trust-region method is contingent on the proximity operator of $\phi$ being exact, i.e. analytically
computable. 
In this paper, we extend the inexact proximal trust-region algorithm in \cite{Baraldi2023} to leverage certain inexact evaluations of the proximity operator of $\phi$.
The proximity operator of $\phi$ at $x$ with stepsize $r$ is defined as the unique solution to the minimization problem
\begin{equation}\label{eq:exact.prox}
  \Proxrphi(x) \coloneqq
  \Argmin_{y\in X}\left\{ \frac{1}{2r}\|y-x\|^2 + \phi(y)\right\}
\end{equation}
and is a powerful tool to deal with composite nonsmooth optimization problems like \eqref{eq:optprob}.
Algorithms that leverage the proximity operator, like ISTA and FISTA \cite{Beck17}, are typically first-order and only apply to convex optimization problems.
Like \cite{Baraldi2023}, these methods require that the proximity operator be computed exactly---a requirement that is often computationally expensive or even impossible to satisfy. For instance, if $X$ is finite dimensional with dot product defined by a diagonal matrix $D$ and $\phi$ is the $\ell^1$ norm, then the proximity operator has an analytical expression. 
In contrast, if the dot product is defined by a non-diagonal symmetric positive definite (spd) matrix $M$, then evaluating the proximity operator requires a specialized iterative method to obtain an approximate solution.
Although preliminary studies on optimization with inexact proximity operators {exist} \cite{Devolder14,MaiaInexactCyclic,Hua12,Kim-chuan12,salzo2012inexact,Richtarik14},
these works only apply to convex problems and often exhibit slow convergence rates.  Using these methods as motivation, we develop a proximal Newton method---tailored to the case of diagonal and non-diagonal dot products---that can handle inexact proximity operators.  We define our inexact proximity operator as
\begin{align}\label{intro:inexact.prox}
    \Proxrphi(x,\delta) \coloneqq  \Big\{p\in X\,\Big\vert\, &\frac{1}{2r}\|p-x\|^2+\phi(p) \nonumber \\
    &\le\frac{1}{2r}\|z-x\|^2+\phi(z)+\delta\|z-p\|\;\;\forall\,z\in X\Big\}
\end{align}
The inclusion of the term $\delta\|z-u\|\ge 0$ on the right-hand side of \eqref{intro:inexact.prox} allows the proximity operator to be computed inexactly.
%In Section~\ref{sec:inexact.subgradient.proximal}, we detail this new proximity operator and relate it to the $\delta$-Fréchet subdifferential \cite{mordukhovich2006variational}.

\if(0)
Our goal in this paper is twofold:
\begin{itemize}
    \item[1.] Develop an algorithm to obtain one solution to the inexact proximal defined in \ref{intro:inexact.prox}, for a some classes of matrices $M$.
    \item[2.] Apply our novel inexact proximal map to \cite{Baraldi2023} to solve \textit{{Burgers'}} equation \textcolor{red}{[reference]}.
\end{itemize}
\fi

%%%%%%%%%%%%%%%%%%%%%%%%%%%
%Drew´s comment
% By relaxing this condition,

% 1. theory of inexact prox map for the trust region method.

% 2. what is a prox operator.

% 3. How is it used inside of the algorithm (cauchy point -> decrease). To satisfy these conditions, we need an specific tolerance....

% 4. depending on the innner product, the proximity can be very hard (or easy) to calculate...example: diagonal is easy(separable)..on the other other, for general SPD you cannot separate the problem.

The remainder of the article is structured as follows.
In Section~\ref{sec:trust}, we review the inexact trust-region method introduced in \cite{Baraldi2023}, presenting the main components of the algorithm. 
We also include a lemma that enables the use of our inexact proximal map. In Section~\ref{sec:inexact.subgradient.proximal}, we lay out our definition of inexact proximity operator based on the Fr\'{e}chet subdifferential. 
In Section~\ref{sec:inexact.subgradient.proximal}, we present the main technical results of this work, where we establish the relationship between the $\delta$-Fr\'{e}chet subdifferential \cite{mordukhovich2006variational} and the $\delta$-proximal map. We conclude with Section~\ref{sec:M.weighted.prox}, where we present numerical results for the optimal control of Burgers' equation. 

%Section \textcolor{red}{X} gives some concluding remarks that sugget further research directions.

\section{Notation and Problem Assumptions}
\label{sec:notation}
Throughout, $X$ denotes a real Hilbert space with inner product $\inp{x}{y}$ and norm $\|x\|=\inp{x}{x}^{1/2}$.  
To simplify the presentation, we associate the topological dual space $X^*$ with $X$ through Riesz representation.
We assume that $\phi:X\to(-\infty,+\infty]$ is proper, closed and convex with effective domain
\[
  \textup{dom}\,\phi\coloneqq \{x\in X\,\vert\, \phi(x)<+\infty\}.
\]
We further assume that $f:X\to\R$ is Fr\'{e}chet differentiable on an open set containing $\textup{dom}\,\phi$ and its derivative is Lipschitz continuous on that set.  We denote the derivative of $f$ at $x\in X$ by $f'(x)\in X^*$ and its gradient (i.e., Riesz representer) by $\nabla f(x)\in X$.  Finally, we assume that the total objective function $F(x)\coloneqq f(x)+\phi(x)$ is bounded from below.

The main focus of this paper is the use of inexact proximity operators arising from weighted inner products.  To this end, we define $a:X\times X\to\mathbb{R}$ to be a symmetric, coercive and continuous bilinear form, i.e., there exists $0<\alpha_1\le\alpha_2<\infty$ such that
\[
    a(x,y) = a(y,x),\quad \alpha_1\|x\|^2\le a(x,x), \qquad\text{and}\quad a(x,y) \le \alpha_2\|x\|\|y\| 
\qquad\forall\,x,\,y\in X.
\]
We associate $a$ with the invertible (cf.~the Lax-Milgram lemma), self-adjoint, positive and continuous linear operator $A:X\to X$ defined by its action:
\[
  \inp{Ax}{y} = a(x,y)\qquad\forall\,x,\,y\in X.
\]
Recall that $a(\cdot,\cdot)$ defines an inner product on $X$ and its associated norm $\|\cdot\|_a = \sqrt{a(\cdot,\cdot)}$ is equivalent to $\|\cdot\|$.  We denote the (inexact) proximity operator associate with $a$ by $\Proxrphi^a$.

For our numerical results, we will work with $X=\R^n$ endowed with the dot product
\[
  \inp{x}{y}=\inp{x}{y}_M \coloneqq  x^\top M y = \sum_{i=1}^n\sum_{j=1}^n m_{i,j}x_i y_j,
\]
where $M\in\R^{n\times n}$ is a non-diagonal spd matrix.  In many common choices of $\phi$, the proximity operator associated with the $M$-weighted dot product lacks an analytical form.  In this case, we replace $\inp{\cdot}{\cdot}$ with the equivalent dot product
\[
  a(x,y) = \inp{x}{y}_D \coloneqq  x^\top D y = \sum_{i=1}^n d_i x_i y_i,
\]
where $D=\textup{diag}(d)\in\R^{n\times n}$ is a positive diagonal matrix.  In this setting, the linear operator $A$ is given by
\[
  A = M^{-1}D.
\]
For clarity, we denote $\Rn$ endowed with the $M$-weighted dot product by $X_M$ and analogously for $X_D$.
We further denote the proximity operator defined on $X_M$ by $\Proxrphi^M$ and similarly on $X_D$ by $\Proxrphi^D$.

%\subsection{Notation and Basic Definitions}
%We shall assume throughout this work that

%\begin{itemize}
%    \item [A1.] $X$ is a subset of $\Rn$.
%    \item [A2.] $\phi:X \rightarrow \R$ is proper, closed and convex function with a compact domain $H=\text{dom } (\phi)$, and also a nonsmooth function.
%\end{itemize}

\section{A Proximal Trust{-}Region Method}
\label{sec:trust}

Trust{-}regions are iterative methods for computing approximate solutions to general nonconvex optimization problems \cite{conn2000trust} and are ideal for leveraging inexact computations \cite{Baraldi2023,kouri2018inexact}.
In the recent work \cite{Baraldi2023}, the authors developed a proximal trust-region method for nonsmooth optimization problems with the form \eqref{eq:optprob} that exploits inexact function and gradient evaluations with guaranteed convergence.  At the $k$-th iteration of \cite[Algorithm~1]{Baraldi2023}, we compute a trial iterate $x_k^+$ that approximately solves the trust-region subproblem
\begin{equation}\label{eq:tr-sub}
  \min_{x\in X}\{m_k(x)\coloneqq f_k(x)+\phi(x)\} \qquad \text{subject to}\qquad \|x-x_k\|\le \Delta_k,
\end{equation}
where $x_k \in \textup{dom}\,\phi$ is the current iterate, $f_k$ is a smooth local model of $f$ around $x_k$, and $\Delta_k > 0$ is the trust-region radius. In particular, we require that the trial iterate $x_k^+$ satisfies the fraction of Cauchy decrease condition: there exist positive constants $\krad$ and $\kfcd$, independent of $k$, such that
\begin{subequations}\label{eq:FCD.condition}
\begin{align}
  \|x_k^+-x_k\| &\le \krad \Delta_k \\
  m_k(x_k)-m_{k}(x_k^+) &\ge \kfcd h_k\min\left\{\frac{h_k}{1+\omega_k},\Delta_k\right\},
\end{align}
\end{subequations}
where $\omega_k$ is a measure of the curvature of $f_k$ given by
\begin{equation*}\label{eq:def.curvature}
  \omega_k\coloneqq \sup \{|\omega(f_k,x_k,s)| \,\vert\, 0<\|s\|\le \krad\Delta_k \},
\end{equation*}
$\omega(g,x,s)$ is the curvature of the Fr\'{e}chet differentiable function $g:X\to\R$ at $x\in X$ in the direction $s\in X$, i.e.,
\[
  \omega(g,x,s) \coloneqq  \frac{2}{\|s\|^2}[g(x+s)-g(x)-\inp{\nabla g(x)}{s}],
\]
and $h_k$ is our stationarity metric given by
\begin{equation}
    \label{eq:hk_exact}
  h_k \coloneqq \frac{1}{r_0}\|\Proxrzerophi(x_k-r_0\nabla f(x_k))-x_k\|.
\end{equation}
    
Given a trial iterate $x_k^+$ that satisfies \eqref{eq:FCD.condition}, we decide whether to accept or reject $x_k^+$ using the ratio of computed ($\cred_k$) and predicted ($\pred_k$) reductions,
\begin{align}\label{eq:rho}
  \rho_k\coloneqq\frac{\cred_k}{\pred_k}.
\end{align}
The computed reduction $\cred_k$ is an approximation of the actual reduction
\begin{align}
  \ared_k \coloneqq F(x_{k})-F(x_k^+) 
\end{align}
and the predicted reduction $\pred_k$ is the decrease predicted by the model $m_k$,
\begin{align}
  \pred_k \coloneqq m_k(x_k)-m_k(x_k^+).
\end{align}
If $\rho_k\ge \eta_1$, we accept $x_k^+$, i.e., $x_{k+1}=x_k^+$. Otherwise, we reject it, setting $x_{k+1}=x_k$. We furthermore use $\rho_k$ to increase or decrease the trust-region radius $\Delta_k$.  Here, $\eta_1\in(0,1)$ is a user-specified parameter.

Algorithm~\ref{alg:cap} states the inexact proximal trust-region method from \cite[Algorithm~1]{Baraldi2023}.
To ensure convergence, Algorithm~\ref{alg:cap} requires the following assumptions on the inexact evaluations $f$ and its gradient. The first assumption ensures that the computed reduction $\cred_k$ is a sufficiently accurate approximation of the actual reduction $\ared_k$.

\begin{assumption}[Inexact Objective]
\label{assump:inexact.object}
    There exists a positive constant $\kobj$, independent of $k$, such that
    \begin{equation}\label{eq:assump.inexact.objective}
        |\ared_k-\cred_k| \leq \kobj\left[ \eta\min\{\pred_k,\theta_k\}\right]^{\zeta}
    \end{equation}
where
\[
\zeta>1,\quad 0<\eta<\min\{\eta_1,(1-\eta_2)\}\quad \text{and}\quad \lim_{k\to+\infty}\theta_k=0
\]
are used provided parameters.
\end{assumption}

{Note that all quantities on the right-hand side of~\eqref{eq:assump.inexact.objective} are available when
computing $\cred_k$, enabling one to avoid the computation of the actual reduction $\ared_k$ and hence the objective function $F$.}
Similarly, the following assumption enables approximations of the gradient of $f$ within a prescribed tolerance that depends on the state of the algorithm.

\begin{assumption}[Inexact Gradient]
\label{assump:inexact.gradient}
The model $f_k: X \rightarrow \R$ has Lipschitz continuous derivatives on an open set containing $\dom\, \phi$ and there exists a positive constant $\kgrad$, independent of $k$, such that
\[
\|g_k - \nabla f(x_k)\|\leq \kgrad\min \{h_k,\Delta_k\},
\]
where $g_k \coloneqq \nabla f_k(x_k)$.
\end{assumption}

\begin{algorithm}
\caption{Nonsmooth Trust-Region Algorithm}\label{alg:cap}
\begin{algorithmic}[1]
\Require $x_1\in \dom \psi$, $\Delta_1>0$, $0<\eta_1<\eta_2<1$, and $0<\gamma_1\leq \gamma_2 \leq \gamma_3$
%\Ensure $y = x^n$
\For{$k=1,2,\ldots$}
    \State \textbf{Model Selection:} Choose $m_k$ that satisfies Assumption~\ref{assump:inexact.gradient} \label{alg:assump.model.inexact}
    \State \textbf{Step Computation:} Compute $x_k^+\in X$ satisfying equation~\eqref{eq:FCD.condition} \label{alg:exact.inexact}
    \State \textbf{Computed Reduction:} Compute $\cred_k$ satisfying Assumption~\ref{assump:inexact.object}
    \State \textbf{Step Acceptance and Radius Update:} Compute $\rho_k$ as in \eqref{eq:rho}

    \If{$\rho_k<\eta_1$}
        \State $x_{k+1}\gets x_k$
        \State $\Delta_{k+1}\in [\gamma_1\Delta_k,\gamma_2\Delta_k]$

    \Else
        \State $x_{k+1}\gets x_k^+$
        \If{$\rho_k\in [\eta_1,\eta_2)$}
            \State $\Delta_{k+1}\in[\gamma_2\Delta_k,\Delta_k]$
        \Else
            \State $\Delta_{k+1}\in[\Delta_k,\gamma_3\Delta_k]$
        \EndIf
    \EndIf

\EndFor
\end{algorithmic}
\end{algorithm}

%Drew´s comment
% their algorithm..

% [introduce the algorithm with exact prox...explain how we should change to accommodate the inexact prox.]

% -inn grad
% -inn $h_k$
% -Cauchy decrease

% Cauchy point: the change (accommodate the inexact prox)...this motivates this work

% We need to ensure the inexact produces sufficient decrease.

A critical component of the convergence theory for Algorithm~\ref{alg:cap} is that the trial iterate satisfies \eqref{eq:FCD.condition}.
In the smooth case (i.e., $\phi\equiv 0$), this condition is satisfied if $x_k^+$ produces at least a fraction of the decrease achieved by the steepest descent step.
In the nonsmooth case, \cite{Baraldi2023} instead defines sufficient decrease using the proximal gradient step.  
In particular, \cite[Lemma~7]{Baraldi2023} illustrates the relationship between the proximal gradient stepsize, $r$, and the required model decrease. 
Before stating this important lemma, we first define the proximal gradient path
\begin{equation}\label{eq:path}
p_k(r)\coloneqq \Proxrphi(x_k-rg_k)-x_k \quad \text{and} \quad x_{k}(r)\coloneqq x_{k}+p_k(r).
\end{equation}
We also define the quantity
\begin{equation}\label{eq:model.decrease}
    Q_k(r)\coloneqq\inp{g_k}{p_k(r)}+\left(\phi(x_k(r)) - \phi(x_k)\right),
\end{equation}
which is used to measure the model decrease of the iterate $x_k(r)$.

The main descent lemma derived in \cite{Baraldi2023} plays the same role in our analysis, and we defer the proof of this lemma to maintain the focus of our exposition. We directly quote \cite[Lemma~7]{Baraldi2023} below.
\begin{lemma}
\label{lem:qk}
    The function $r \mapsto Q_k (r )$ is continuous and nonincreasing. In fact, if $r >t > 0$ then $Q_k (t) > Q_k (r )$ whenever $x_k (t) \neq x_k(r)$. Moreover, if $h_k > 0$ then
\[
Q_{k}(r)=\inp{g_k}{p_k(r)} + \left(\phi(x_k(r))-\phi(x_k)\right) \leq -\frac{1}{r}\|p_k(r)\|^2 < 0
\]
\end{lemma}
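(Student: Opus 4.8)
The plan is to reduce all three assertions to properties of the strongly convex subproblem that defines the path. For fixed $r>0$ I would set
\[
\psi_r(p)\coloneqq \tfrac{1}{2r}\norm{p}^2+\inp{g_k}{p}+\phi(x_k+p)-\phi(x_k),
\qquad
Q(p)\coloneqq\inp{g_k}{p}+\phi(x_k+p)-\phi(x_k),
\]
so that $\psi_r(p)=\tfrac{1}{2r}\norm{p}^2+Q(p)$, and observe that $p_k(r)$ is exactly the minimizer of $\psi_r$, unique by $\tfrac1r$-strong convexity, with $Q_k(r)=Q(p_k(r))$. The first-order optimality condition for $\Proxrphi(x_k-rg_k)$ reads
\[
-g_k-\tfrac{1}{r}p_k(r)\in\partial\phi(x_k(r)),
\]
and this single inclusion is the engine behind the descent bound.

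For monotonicity I would run the standard two-point energy comparison. For $r>t>0$, minimality of $p_k(t)$ for $\psi_t$ and of $p_k(r)$ for $\psi_r$ gives two inequalities whose sum cancels the $Q$-terms and leaves
\[
\Bigl(\tfrac{1}{2t}-\tfrac{1}{2r}\Bigr)\bigl(\norm{p_k(t)}^2-\norm{p_k(r)}^2\bigr)\le 0,
\]
so that $\norm{p_k(t)}\le\norm{p_k(r)}$. Feeding this back into the minimality inequality for $\psi_r$ yields $Q_k(r)\le Q_k(t)$, i.e.\ $Q_k$ is nonincreasing. For the strict claim when $x_k(t)\neq x_k(r)$, I would rule out the equality $\norm{p_k(t)}=\norm{p_k(r)}$: if the norms agreed, the two comparisons would force $Q(p_k(t))=Q(p_k(r))$ and hence both vectors would minimize $\psi_t$, contradicting uniqueness. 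Thus $\norm{p_k(t)}<\norm{p_k(r)}$, and the $\psi_r$-comparison then gives $Q_k(r)<Q_k(t)$.

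Continuity is the most delicate point, and I would obtain it from strong convexity rather than from any regularity of $\phi$. Writing the quadratic lower bounds $\psi_s(q)\ge\psi_s(p_k(s))+\tfrac{1}{2s}\norm{q-p_k(s)}^2$ for $s\in\{t,r\}$ and evaluating each at the other minimizer, summation gives
\[
\Bigl(\tfrac{1}{2t}+\tfrac{1}{2r}\Bigr)\norm{p_k(r)-p_k(t)}^2\le\Bigl(\tfrac{1}{2t}-\tfrac{1}{2r}\Bigr)\bigl(\norm{p_k(r)}^2-\norm{p_k(t)}^2\bigr).
\]
Since $r\mapsto\norm{p_k(r)}$ is monotone and hence locally bounded, the right-hand side vanishes as $t\to r$, proving $r\mapsto p_k(r)$ is continuous. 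Continuity of $Q_k$ then follows by writing $Q_k(r)=v_k(r)-\tfrac{1}{2r}\norm{p_k(r)}^2$, where $v_k(r)\coloneqq\min_p\psi_r(p)$ is concave in $1/r$ (an infimum of functions affine in $1/r$) and therefore continuous, while $\norm{p_k(r)}^2$ is continuous by the previous step.

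Finally, the descent bound is immediate from the recorded inclusion: applying the subgradient inequality for $\phi$ at $x_k(r)$ with subgradient $-g_k-\tfrac1r p_k(r)$ and test point $y=x_k$, then rearranging, gives exactly
\[
Q_k(r)=\inp{g_k}{p_k(r)}+\bigl(\phi(x_k(r))-\phi(x_k)\bigr)\le -\tfrac{1}{r}\norm{p_k(r)}^2.
\]
Strict negativity requires $p_k(r)\neq 0$, and here I would use that the fixed-point condition $p_k(r)=0$ is equivalent to $-g_k\in\partial\phi(x_k)$ and is independent of the stepsize; since $h_k>0$ asserts precisely that $x_k$ fails to be a proximal fixed point, $\norm{p_k(r)}>0$ for every $r>0$ and the bound is strictly negative. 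The main obstacle is the continuity argument: monotonicity and the descent inequality are short consequences of minimality and the subgradient inequality, whereas continuity without regularity of $\phi$ needs the strong-convexity comparison above together with the concavity of the value function $v_k$.
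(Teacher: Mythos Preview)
The paper does not prove this lemma: it explicitly states that it ``defer[s] the proof of this lemma to maintain the focus of our exposition'' and directly quotes the result from \cite{Baraldi2023}. There is thus no in-paper argument to compare against.

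Your proposal is correct and follows the standard route: casting $p_k(r)$ as the unique minimizer of the strongly convex $\psi_r$, running the two-point energy comparison for monotonicity and strictness, extracting continuity of $r\mapsto p_k(r)$ from the strong-convexity lower bound together with concavity of the value function $v_k$ in $1/r$, and reading off the descent inequality from the subgradient inclusion $-g_k-\tfrac1r p_k(r)\in\partial\phi(x_k(r))$. One small caveat on the final strictness step: in this paper $h_k$ is defined in \eqref{eq:hk_exact} using the true gradient $\nabla f(x_k)$, whereas $p_k(r)$ in \eqref{eq:path} is built with the model gradient $g_k=\nabla f_k(x_k)$, so your sentence ``$h_k>0$ asserts precisely that $x_k$ fails to be a proximal fixed point'' conflates two different fixed-point conditions. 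This is an artifact of how the lemma is transplanted from \cite{Baraldi2023} into the present notation rather than a flaw in your method; your key observation---that $p_k(r)=0$ for some $r>0$ is equivalent to $-g_k\in\partial\phi(x_k)$ and hence to $p_k(r)=0$ for every $r>0$---is exactly what does the work once the correct hypothesis is in place.
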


Lemma~\ref{lem:qk} is an inherent property of the proximal gradient map, which is used in \cite{Baraldi2023} to verify the conditions defining the Cauchy point
\cite[Eq.~(25)]{Baraldi2023}.
In the subsequent sections, we extend Algorithm~\ref{alg:cap} and Lemma~\ref{lem:qk} to handle the case of inexact proximity operators.

\section{Inexact Proximity Operator}
\label{sec:inexact.subgradient.proximal}
We first present a general definition of inexact proximity operators, which is motivated by the $\delta$-Fr\'{e}chet subgradient.  We will later demonstrate how to compute a $\delta$-proximity operator for the case of weighted inner products.

\begin{definition}[$\delta$-Proximity Operator]
\label{def:inexactProximal}
Let $x\in\text{dom}\,\phi$, $r>0$, and $\delta>0$ be given. We define the $\delta$-proximity operator at $x$ as the following set-valued map
\begin{align}\label{eq:inex.prox}
    \Proxrphi(x,\delta) \coloneqq  \Big\{p\in X\,\Big\vert\, &\frac{1}{2r}\|p-x\|^2+\phi(p) \nonumber \\
    &\le\frac{1}{2r}\|z-x\|^2+\phi(z)+\delta\|z-p\|\;\;\forall\,z\in X\Big\}.
\end{align}
\end{definition}

As claimed, the notion of $\delta$-proximity operator introduced in Definition~\ref{def:inexactProximal} is closely related the the $\delta$-Fr\'{e}chet subdifferential, which we define next.

\begin{definition}[$\delta$-Subgradient]
\label{def:subgradient}
    Let $x\in\text{dom}\,\phi$ and $\delta>0$ be given. We say that $s\in X$ is a $\delta$-Fr\'{e}chet subgradient, or simply a $\delta$-subgradient, of $\phi$ at $x$ if
    \[
    \phi(y)\ge \phi(x) + \inp{s}{y-x}-\delta\|y-x\|
    \]
for all $y\in X$. The set $\partial \phi_{\delta}(x)$ consists of all $\delta$-subgradient of $\phi$ at $x$.
\end{definition}

It is straightforward to observe that by setting $\delta=0$ we recover the standard definitions of subgradient and proximity operator.
By replacing the proximity operator with the inexact version defined in Definition~\ref{def:inexactProximal}, we redefine the proximal gradient path \eqref{eq:path}, and hence the Cauchy point, as
\begin{equation}
\tilde p_k(u,r)\coloneqq u-x_k \quad \text{and} \quad \tilde x_{k}(u,r)\coloneqq x_{k}+\tilde p_k(u,r),
\end{equation}
where $u\in \Proxrphi (x_k-rg_k,\delta)$. Analogously, we update the quantity $Q_k$ in \eqref{eq:model.decrease} to
\begin{equation}
    \tilde Q_k(u,r)\coloneqq\inp{g_k}{\tilde p_k(u,r)}+\left(\phi(\tilde x_k(u,r)) - \phi(x_k)\right)
\end{equation}

We derive the close relationship between the $\delta$-subdifferntial and the $\delta$-proximity operator in Theorem~\ref{thm:delta.Prox.Thm}, but we first require the following technical lemma.
\if(0)
\begin{lemma}[Optimality Condition]
\label{lemma:optim.subg} 
The following holds for $x\in X$ and $\delta\ge 0$,
\[
  \phi(x)-\phi(y)\leq \delta \|x-y\| \quad\forall\, y\in X
  \qquad\iff\qquad
  0\in\partial_\delta \phi(x).
\]
\end{lemma}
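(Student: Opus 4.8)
The plan is to prove the equivalence by directly specializing Definition~\ref{def:subgradient} to the subgradient candidate $s=0$, since the two sides of the claimed ``$\iff$'' turn out to be the very same inequality written in two forms. By the definition of the $\delta$-subdifferential, the assertion $0\in\partial_\delta\phi(x)$ unpacks to
\[
  \phi(y)\ge\phi(x)+\inp{0}{y-x}-\delta\|y-x\|\qquad\forall\,y\in X,
\]
and since $\inp{0}{y-x}=0$ this is just $\phi(y)\ge\phi(x)-\delta\|y-x\|$ for every $y$.

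First I would rearrange this to isolate the objective gap, giving $\phi(x)-\phi(y)\le\delta\|y-x\|$ for all $y\in X$, and then invoke the symmetry of the norm, $\|y-x\|=\|x-y\|$, to recognize the right-hand side as $\delta\|x-y\|$. This is exactly the left-hand condition of the lemma. Because each manipulation is an algebraic identity that holds termwise for every fixed $y$, the same chain read in reverse supplies the converse implication, so the ``$\Longrightarrow$'' and ``$\Longleftarrow$'' directions are obtained simultaneously rather than argued separately.

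The main point to watch—rather than a genuine obstacle—is careful sign bookkeeping together with the norm symmetry; no analytic machinery is required. I would also note explicitly that the argument never uses $\delta>0$, so the equivalence is valid uniformly for all $\delta\ge 0$ as stated; in particular, the case $\delta=0$ recovers the classical optimality characterization that $0\in\partial\phi(x)$ if and only if $x$ globally minimizes $\phi$.
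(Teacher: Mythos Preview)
Your proof is correct and follows exactly the same approach as the paper: the paper's proof simply states ``This is a direct consequence of Definition~\ref{def:subgradient},'' and your argument spells out precisely this specialization of the definition to $s=0$.
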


\begin{proof}
This is a direct consequence of Definition~\ref{def:subgradient}.
\end{proof}

\begin{lemma}\label{lem:inexact.subg.prox}
Let $x,\, z \in X$ and $\delta\ge 0$. Then,
\[
  u\in z + \partial_\delta \phi(x)
  \qquad\iff\qquad
  u-z \in \partial_\delta \phi(x).
\]
\end{lemma}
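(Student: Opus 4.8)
The plan is to treat this as a purely set-theoretic identity. Since $\partial_\delta\phi(x)\subseteq X$ is a set and $z\in X$ is a fixed vector, the expression $z+\partial_\delta\phi(x)$ denotes the translate $\{z+s\,\vert\,s\in\partial_\delta\phi(x)\}$, and the claim reduces to the standard fact that membership in a translated set is equivalent to membership of the back-translated point in the original set.

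First I would unpack the left-hand membership: by definition of the set translate, $u\in z+\partial_\delta\phi(x)$ holds precisely when there exists $s\in\partial_\delta\phi(x)$ with $u=z+s$. Solving for $s$ gives $s=u-z$, so the existence of such an $s$ is equivalent to the single assertion $u-z\in\partial_\delta\phi(x)$, which establishes both implications simultaneously. If a fully explicit argument is preferred, one can instead invoke Definition~\ref{def:subgradient} directly: the condition $u-z\in\partial_\delta\phi(x)$ means exactly that
\[
\phi(y)\ge\phi(x)+\inp{u-z}{y-x}-\delta\|y-x\|\qquad\forall\,y\in X,
\]
which is the defining $\delta$-subgradient inequality with subgradient $s=u-z$; tracing this back through the translate recovers $u\in z+\partial_\delta\phi(x)$. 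Notably, no analytic estimate, convexity, or property of $\phi$ beyond the definition is invoked.

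I do not anticipate any genuine obstacle: the statement is an immediate consequence of the definition of set addition, and the only care needed is to keep the bookkeeping straight between the point $u$, the shift $z$, and the subgradient $u-z$. The lemma is phrased in this form chiefly so that it can be cited cleanly when relating the $\delta$-proximity operator to the $\delta$-subdifferential in Theorem~\ref{thm:delta.Prox.Thm}, where the shift $z$ will be instantiated by the proximal-gradient offset.
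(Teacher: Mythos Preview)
Your proposal is correct and matches the paper's own argument essentially line for line: unpack the translate $z+\partial_\delta\phi(x)$ as $u=z+\tilde u$ with $\tilde u\in\partial_\delta\phi(x)$, then (optionally) spell out the $\delta$-subgradient inequality with $\tilde u=u-z$. There is nothing to add.
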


\begin{proof}
We begin by observing that $u \in \partial_\delta \phi(x)+z$ if, and only if, $u=\tilde u+z$, where $\tilde u\in \partial_\delta\phi(x)$. Using Definition~\ref{def:subgradient}, the last condition is equivalent to
\begin{align*}
\phi(y)&\geq \phi(x)+\inp{\tilde u}{y-x}-\delta\|x-y\|\\
&= \phi(x)+\inp{u-z}{y-x}-\delta\|x-y\|
\end{align*}
for every $y\in X$, and this concludes the proof.
\end{proof}
\textcolor{red}{Do we need the proof of lemma 5.2? This is a result of Clason/Valkonen book? In fact, why not use the C\&V book results? 5.1 is the definition and 5.2 is trivial.}
\fi
\begin{lemma}\label{lem:composite.inexact}
Let $g:X\to\R$ be a continuously Fr\'{e}chet differentiable convex function,
\[
  H(x)\coloneqq g(x) + \phi(x),
\]
and $\delta\ge 0$, then
\[
  \partial_\delta H(x) = \nabla g(x) + \partial_\delta \phi(x).
\]
Moreover, $x^*$ is a $\delta$-minimizer of $H$, i.e., $H(x^*)\le H(x) + \delta\|x-x^*\|$ for all $x\in X$, if and only if 
\[
  -\nabla g(x^*) \in \partial_\delta \phi(x^*).
\]
\end{lemma}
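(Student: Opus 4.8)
The plan is to reduce both claims to the exact (Moreau--Rockafellar) subdifferential sum rule by first establishing a clean description of the $\delta$-subdifferential for convex functions. The key observation is that, for any proper closed convex $\psi:X\to(-\infty,+\infty]$ and $x\in\dom\psi$,
\[
  \partial_\delta\psi(x) = \partial\psi(x) + \delta\,\overline{B},
\]
where $\overline{B}\coloneqq\{v\in X : \norm{v}\le 1\}$ is the closed unit ball and $\partial\psi=\partial_0\psi$ is the ordinary subdifferential. To see this, I would rewrite the defining inequality of Definition~\ref{def:subgradient}, moving the linear term to the left, as the statement that $x$ is a global minimizer of $y\mapsto \psi(y)-\inp{s}{y}+\delta\norm{y-x}$. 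Since the perturbation $y\mapsto -\inp{s}{y}+\delta\norm{y-x}$ is finite and continuous on all of $X$, the Moreau--Rockafellar sum rule applies at $x$ and gives the first-order optimality condition $0\in\partial\psi(x)-s+\delta\,\partial\norm{\cdot}(0)$. Since $\partial\norm{\cdot}(0)=\overline{B}$ under the Riesz identification, rearranging yields $s\in\partial\psi(x)+\delta\overline{B}$, which is the asserted characterization.

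Given this, the sum rule follows by chaining three identities. I apply the characterization to $H$ (which is proper closed convex, since $g$ is finite convex continuous and $\phi$ is proper closed convex), then the exact sum rule $\partial H(x)=\nabla g(x)+\partial\phi(x)$ (valid because $g$ is finite, convex and continuous, so no constraint qualification is needed and $\partial g(x)=\{\nabla g(x)\}$ by Fr\'echet differentiability), and finally the characterization to $\phi$:
\[
  \partial_\delta H(x)=\partial H(x)+\delta\overline{B}=\nabla g(x)+\partial\phi(x)+\delta\overline{B}=\nabla g(x)+\partial_\delta\phi(x).
\]
For the optimality statement, I would observe that the defining inequality of a $\delta$-minimizer, namely $H(x)\ge H(x^*)+\inp{0}{x-x^*}-\delta\norm{x-x^*}$ for all $x\in X$, is exactly the condition $0\in\partial_\delta H(x^*)$ read off Definition~\ref{def:subgradient} with subgradient $s=0$. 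Combining this with the sum rule just established, $0\in\partial_\delta H(x^*)=\nabla g(x^*)+\partial_\delta\phi(x^*)$, which rearranges to $-\nabla g(x^*)\in\partial_\delta\phi(x^*)$.

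The main obstacle is the reverse inclusion $\partial_\delta H(x)\subseteq\nabla g(x)+\partial_\delta\phi(x)$. The naive strategy of simply adding the two defining inequalities succeeds only for the forward inclusion: there, convexity of $g$ supplies $g(y)\ge g(x)+\inp{\nabla g(x)}{y-x}$, which adds correctly to the $\delta$-subgradient inequality for $\phi$. For the reverse inclusion one would instead need to \emph{subtract} this convexity estimate to lower-bound $\phi(y)$, but the inequality then points the wrong way and is useless. The ball characterization is precisely what circumvents this difficulty, trading the delicate bookkeeping of the inexactness term $\delta\norm{\cdot}$ for the standard convex sum rule. The only technical care required is to verify the continuity hypothesis underlying each application of Moreau--Rockafellar, which holds because both the perturbation $-\inp{s}{\cdot}+\delta\norm{\cdot-x}$ and the smooth part $g$ are finite and continuous on all of $X$.
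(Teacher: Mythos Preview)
Your argument is correct. The paper itself does not prove this lemma: its entire proof is the single sentence ``This is a direct consequence of \cite[Corollary~17.3]{clason2023introduction}.'' So there is no approach to compare against beyond the outsourced citation.

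What you supply is a genuinely self-contained proof, and the key step---the identity $\partial_\delta\psi(x)=\partial\psi(x)+\delta\,\overline{B}$ for proper closed convex $\psi$---is exactly the right reduction. Your derivation of it via the optimality condition for $y\mapsto\psi(y)-\inp{s}{y}+\delta\|y-x\|$ and the Moreau--Rockafellar sum rule (applicable because the perturbation is finite and continuous) is clean, and the identification $\partial\|\cdot\|(0)=\overline{B}$ is immediate in a Hilbert space under Riesz. Once that characterization is in hand, both the sum rule $\partial_\delta H=\nabla g+\partial_\delta\phi$ and the $\delta$-optimality statement fall out in one line each, as you show. Your closing remark about why the naive ``add/subtract the convexity inequality for $g$'' strategy fails for the reverse inclusion is also accurate and explains why the ball characterization is the natural device here.
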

\begin{proof}
This is a direct consequence of \cite[Corollary 17.3]{clason2023introduction}.
%, we have
%\[
%0\in (\partial_\delta H(\cdot))(x^*) = \nabla g(x^*) + \partial_\delta\phi(x^*)
%\]
%and we conclude by applying Lemma~\ref{lem:inexact.subg.prox}.
\end{proof}
%\textcolor{red}{Do we need the proof here? It's directly from C\&V and might as well state it for nonconvex $g$.}
%It is very easy to see that in the previous lemma, if 
%\[
%-\nabla g(x^*) \in \partial_\delta \phi(x^*)
%\]
%then $x^*$ is a $\delta$-minimizer of $H(\cdot)$.

We now state and prove the main result of this section.

\begin{theorem}%[$\delta$-Prox]
\label{thm:delta.Prox.Thm}
Let $x\in X$ and $\delta\ge 0$. Then,
\[
  u\in \Proxrphi(x,\delta )
  \qquad\iff\qquad
  x \in (I+r\partial_\delta\phi)(u).
\]
\end{theorem}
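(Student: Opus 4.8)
The plan is to reduce the claim to Lemma~\ref{lem:composite.inexact} by recognizing that membership in $\Proxrphi(x,\delta)$ is exactly a $\delta$-minimality condition for a suitable smooth-plus-nonsmooth composite. First I would set
\[
  g(z)\coloneqq\frac{1}{2r}\|z-x\|^2,
\]
which is convex and continuously Fr\'echet differentiable with gradient (Riesz representer) $\nabla g(z)=\tfrac{1}{r}(z-x)$. Writing $H(z)\coloneqq g(z)+\phi(z)$, the defining inequality of Definition~\ref{def:inexactProximal} reads
\[
  H(u)\le H(z)+\delta\|z-u\|\qquad\forall\,z\in X,
\]
which is precisely the statement that $u$ is a $\delta$-minimizer of $H$.

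With this identification in hand, I would apply the second part of Lemma~\ref{lem:composite.inexact} to $H=g+\phi$: since $g$ is smooth and convex, $u$ is a $\delta$-minimizer of $H$ if and only if $-\nabla g(u)\in\partial_\delta\phi(u)$. Substituting $\nabla g(u)=\tfrac{1}{r}(u-x)$ yields $\tfrac{1}{r}(x-u)\in\partial_\delta\phi(u)$, hence $x-u\in r\,\partial_\delta\phi(u)$, and rearranging gives $x\in u+r\,\partial_\delta\phi(u)=(I+r\partial_\delta\phi)(u)$. Every implication here is an equivalence, so both directions of the stated \emph{iff} follow simultaneously.

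The content is essentially a translation, so there is no serious obstacle; the one point requiring care is confirming that the quadratic $g$ satisfies the hypotheses of Lemma~\ref{lem:composite.inexact} (convexity and continuous differentiability) and that its gradient under the Riesz identification fixed in Section~\ref{sec:notation} is indeed $\tfrac{1}{r}(u-x)$, since the factor $r$ in the final inclusion is produced entirely by this scaling. I would also observe that the case $\delta=0$ recovers the classical resolvent identity $u=\Proxrphi(x)\iff x\in(I+r\partial\phi)(u)$, which provides a useful sanity check.
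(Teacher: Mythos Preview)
Your proposal is correct and follows essentially the same route as the paper's proof: define the quadratic-plus-$\phi$ composite, recognize that $u\in\Proxrphi(x,\delta)$ is precisely the $\delta$-minimizer condition, invoke Lemma~\ref{lem:composite.inexact} to obtain $\tfrac{1}{r}(x-u)\in\partial_\delta\phi(u)$, and rearrange to $x\in(I+r\partial_\delta\phi)(u)$. The paper calls the composite $\varphi$ rather than $H$, but the argument is identical.
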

\begin{proof}
Let
\(
\varphi(\cdot ) = \tfrac{1}{2r}\|\cdot -x\|^2  + \phi(\cdot ),
\)
and recall Definition~\ref{def:inexactProximal} implies $u\in \Proxrphi(x,\delta)$ satisfies 
\begin{equation}\label{eq:delta.prox.H}
    \varphi(u) \le \varphi(z) + \delta\|z-u\| \quad\forall\, z\in X
\end{equation}
i.e., $u$ is a $\delta$-minimizer of $\varphi$.
Applying Lemma~\ref{lem:composite.inexact}, expression \eqref{eq:delta.prox.H} implies
\[
-\nabla g(u)= r^{-1}(x-u)\in \partial_\delta\phi(u)
\quad \iff \quad 
x \in u+r\partial_\delta\phi(u) = (I+r\partial_\delta\phi)(u),
\]
concluding the proof.
% the last expression is equivalent to $0\in \partial_\delta(\tilde \phi(u))$. Thus
% \begin{align*}
% 0&\in \partial_\delta \tilde \phi(u) \overset{\text{Lemma }\ref{lemma:optim.subg}}= g + u-x + \partial_\delta \phi(u)
% \end{align*}
% which is equivalent, by Lemma \ref{lem:inexact.subg.prox}(2b), that
% \[
% x-u-g \in \partial_\delta \phi(u) \iff x-g \in (I+\partial_\delta\phi)(u)
% \]
% Therefore,
% \[
% u=\Prox_{\phi}(x,g,\delta ) \iff x-g \in (I+\partial_\delta\phi)(u)
% \]
\end{proof}

%It is noteworthy that when $\delta=0$, Theorem~\ref{thm:delta.Prox.Thm} reduces to the standard well-know result in the exact regime
%\[
%u=\Prox_{\phi}(x)
%\qquad\iff\qquad
%x \in (I+\partial\phi)(u).
%\]
 
The proximity operator is present in two components of Algorithm~\ref{alg:cap}, through the stationarity metric $h_k$ and in defining the Cauchy point using $Q_k$.
To ensure convergence, we require that $\delta>0$ and $u\in \Proxrphi(x_k-rg_k,\delta)$ are chosen so that the Cauchy point produces sufficient decrease.  
When using inexact proximity operators, the stationarity metric \eqref{eq:hk_exact}
becomes 
\begin{equation}
\label{eq:hk_inexact}
  \tilde h_k \coloneqq \frac{1}{r_0}\|\tilde p_k(u,r_0)\|
  \qquad\text{for some}\qquad
  u\in\Proxrphi(x_k-r_0 g_k,\delta).
\end{equation}
To address the accuracy of $\tilde h_k$, we notice that the reverse triangle inequality ensures that
\[
  h_k \le \tilde h_k + |h_k-\tilde h_k|.
\] 
Consequently, by choosing $\delta>0$ so that
\begin{align}\label{eq:h.inexactness}
|h_k  -  \tilde h_k|\le \kgrad \min\{\tilde h_k,\Delta_k\},
\end{align}
holds, we ensure that
\[
  h_k \le (1+\kgrad)\tilde h_k.
\]
Hence, if the limit inferior of $\tilde h_k$ is zero, then so is the limit inferior of $h_k$.
Given the computable quantity $\tilde h_k$, we further modify the trust-region algorithm to ensure that the trial iterate $x_k^+$ satisfies
\begin{subequations}\label{eq:exact.inexact}
\begin{align}
  \| x_k^+-x_k\| &\le \krad \Delta_k \\
  m_k(x_k)-m_{k}( x_k^+) &\ge \kfcd  \tilde h_k\min\left\{\frac{\tilde  h_k}{1+\omega_k},\Delta_k\right\}.
\end{align}
\end{subequations}
Finally, we modify Assumption~\ref{assump:inexact.gradient} to account for $\tilde h_k$ as follows.

\begin{assumption}[Inexact Gradient]
\label{assump:inexact.gradient.prox}
The model $f_k: X \to \R$ has Lipschitz continuous gradient on an open set containing $\dom\, \phi$ and there exists a positive constant $\kgrad$, independent of $k$, such that
\[
\begin{aligned}
|h_k - \tilde h_k| &\le \kgrad \min\{\tilde h_k, \Delta_k\} \\
\|g_k - \nabla f(x_k)\| &\le \kgrad\min \{\tilde h_k,\Delta_k\}.
\end{aligned}
\]
\end{assumption}

With these changes we are able to produce a new version of Algorithm~\ref{alg:cap} that allows inexact computation of the proximity operator.
In particular, line~2 in Algorithm~\ref{alg:cap} is modified to {\em ``Choose $\tilde{h}_k$ and $m_k$ that satisfy Assumption~\ref{assump:inexact.gradient.prox}''}.
Notice that $\tilde h_k$ and $g_k$ must be computed in tandem to satisfy the conditions in Assumption~\ref{assump:inexact.gradient.prox}, which can be accomplished by a straight forward modification of \cite[Algorithm~4]{Baraldi2023}.
In order to ensure that the Cauchy point achieves sufficient decrease, we additionally enforce the proximal gradient descent condition
\begin{equation}\label{eq:decr.inexact}
  \tilde Q_k(u,r) \le -\frac{\kappa_{\rm dec}}{r}\|\tilde{p}_k(u,r)\|^2,
\end{equation}
where $\kappa_{\rm dec}$ is a positive constant that is independent of $k$.  Note that if $u=\Proxrphi(x_k-rg_k)$, then \eqref{eq:decr.inexact} is satisfied with $\kappa_{\rm dec}=1$ (cf.~Lemma~\ref{lem:qk}).  To demonstrate how we enforce these conditions, we restrict our attention to the case of weighted inner products.

\if(0)
\begin{algorithm}
\caption{Nonsmooth Trust-Region Algorithm: Inexact Proximal}\label{alg:cap2}
\begin{algorithmic}[1]
\Require $x_1\in \dom \psi$, $\Delta_1>0$, $0<\eta_1<\eta_2<1$, and $0<\gamma_1\leq \gamma_2 \leq \gamma_3$
%\Ensure $y = x^n$
\For{$k=1,2,\ldots$}
    \State \textbf{Inexact Prox:} Choose $\tilde h_k$ satisfying~\eqref{eq:h.inexactness} \label{alg:tilde.h.k}
    \State \textbf{Model Selection:} Choose $m_k$ that satisfies Assumption~\ref{assump:inexact.gradient.prox} \label{alg:assump.model.inexact}
    \State \textbf{Step Computation:} Compute $x_k^+\in X$ satisfying equation~\eqref{eq:exact.inexact} \label{alg:exact.inexact}
    \State \textbf{Computed Reduction:} Compute $\cred_k$ satisfying Assumption~\ref{assump:inexact.object}
    \State \textbf{Step Acceptance and Radius Update:} Compute $\rho_k$ as in \eqref{eq:rho}

    \If{$\rho_k<\eta_1$}
        \State $x_{k+1}\gets x_k$
        \State $\Delta_{k+1}\in [\gamma_1\Delta_k,\gamma_2\Delta_k]$

    \Else
        \State $x_{k+1}\gets x_k^+$
        \If{$\rho_k\in [\eta_1,\eta_2)$}
            \State $\Delta_{k+1}\in[\gamma_2\Delta_k,\Delta_k]$
        \Else
            \State $\Delta_{k+1}\in[\Delta_k,\gamma_3\Delta_k]$
        \EndIf
    \EndIf

\EndFor
\end{algorithmic}
\end{algorithm}
\fi

% To this end, we derive the following properties  of the $\delta$-proximal gradient map and $\delta$-subgradient.

% \textcolor{red}{
% \begin{enumerate}
%     \item State where the inexact prox comes into the algorithm - this would be the inexact gradient condition. 
%     \item Something to the effect of: in assumption 3.2, $h_k$ is replaced with $\tilde h_k$ (inexact prox) which takes in a tolerance $\delta$. We also require $|h_k  -  \tilde h_k|\le \kappa_{h}$. This is satisfied via \cite[Algorithm 4]{Baraldi2023}. 
% \end{enumerate}
% }

%\section{Properties of the $\delta$-Subgradient and $\delta$-Proximity Operator}
%\label{sec:properties.inexactness}

\section{Weighted Proximal Operators}
\label{sec:M.weighted.prox}
%For the remainder of the paper, we restrict our attention to the case that $X=X_M$.
%\begin{equation}\label{app.M.weighted.prox}
%\begin{aligned}
%&\Prox^M_{r \phi}(x,0,\delta) \coloneqq \hdots \\
%&\left\{y:\frac{1}{2r}\|y - x\|_M^2 + \phi(y) \leq \Argmin_{z\in X}\left\{\frac{1}{2r}\|z - x\|_M^2 + \phi(z) +\delta\|z-y\|_M \right\}\right\}.
%\end{aligned}
%\end{equation}
To motivate this section, consider the finite dimensional case $X=X_M$.  When $\phi$ is separable, it is typically much easier to compute the proximity operator defined on $X_D$ (recall $D$ is a diagonal matrix), i.e.,
\[
  \Prox^D_{r \phi}(x) = \operatorname*{arg\,min}_{y\in X} \left\{\frac{1}{2r}\|y - x\|_D^2 + \phi(y)\right\},
\]
because the optimization problem defining $\Prox^D_{r\phi}(x)$ can be reduced to solving one-dimensional optimization problems for each component of $\Prox^D_{r\phi}(x)$.
For example, this is the case when $\phi(\cdot) \equiv \|\cdot\|_1$ or when the $\phi$ is the indicator function for bound constraints. 

We consider the more general setting of replacing $\Proxrphi$ with $\Proxrphi^a$ defined using the inner product induced by the bilinear form $a(\cdot,\cdot)$ and assume that $\Proxrphi^a$ has an analytical form.
To this end, we define an algorithm, listed as Algorithm~\ref{alg:inex.M.prox} that computes a $\delta$-proximity operator using $\Proxrphi^a$.
Algorithm~\ref{alg:inex.M.prox} is the usual $a$-weighted proximal gradient algorithm applied to compute $\Proxrphi(x)$ and consequently, the iterates $\{u_\ell\}$, ignoring the stopping conditions, converge strongly to $\Proxrphi(x)$ \cite[Corollary~28.9]{bauschke2017monotone}.
Notice that we employ a modified stopping condition that helps to ensure that the computed solution is a $\delta$-proximity operator.

\begin{algorithm}[htb!]
\caption{Weighted Proximal Gradient}
\begin{algorithmic}[1]
\Require{The point $x\in X$, the proximity parameter $r>0$, and $\varepsilon>0$}
\State $\ell\gets 0$
\State $u_0\gets \Prox^a_{r\phi}(x)$
%\While{$\delta < \frac{1}{r}\|(A-I)(u_\ell-u_{\ell+1})\|$}
\While{$\varepsilon < \|u_\ell-u_{\ell+1}\|_a$}
  \State{$u_{\ell+1}\gets \Prox_{r\phi}^a(u_{\ell}-A^{-1}(u_{\ell}-x))$}
  \State{$\ell\gets \ell+1$} 
\EndWhile
\end{algorithmic}
\label{alg:inex.M.prox}
\end{algorithm}

%It is worth mentioning that our algorithm requires that $D\neq M$, otherwise the search for an $\delta$ approximate to solution to the $M$-weighted proximal gradient, with $M=D$, would be possible by simply solving
%\[
%  \Prox^D_{r \phi}(x) = \operatorname*{arg\,min}_{y\in X} \left\{\frac{1}{2r}\|y - x\|_D^2 + \phi(y)\right\}.
%\]

%\textcolor{red}{Assuming conditions from v3 on $D$ and $M$ (and that $D - M$ is invertible), the d-prox is a strongly monotone operator for the $M$ inner product, which allows us to compute an error estimate between $h_k$ and $\tilde h_k$. This affects how we choose $\delta$. Relate this back to $Q_k$. }

To satisfy Assumption~\ref{assump:inexact.gradient.prox}, we must bound $|h_k-\tilde h_k|$, which we can bound in terms of the error in the $\delta$-proximity operator using the reverse triangle inequality, i.e.,
\[
  |h_k-\tilde h_k| \le r_0^{-1}\|\Proxrphi(x_k-r_0 g_k) - \tilde{x}_k(u,r_0)\|,
\]
where again $u\in\Proxrphi(x_k-r_0 g_k,\delta)$ for some $\delta > 0$.  
As we now demonstrate, Algorithm~\ref{alg:inex.M.prox} yields an element that approximates the true proximity operator, $\Proxrphi(x,\delta)$, to arbitrary precision.   For this result, we recall the definition $\varphi(\cdot)\coloneqq \tfrac{1}{2r}\|\cdot-x\|^2+\phi(\cdot)$ from the proof of Theorem~\ref{thm:delta.Prox.Thm}. 
% This facilitates the analysis of our new proposed Algorithm~\ref{alg:cap2} to handle inexact proximal calculation, particularly in
% its lines \ref{alg:tilde.h.k}, \ref{alg:assump.model.inexact} and \ref{alg:exact.inexact}

%The matrices $M$ and $D$ both satisfy the basic properties of bilinear forms described in Section \ref{sec:notation}, which we use to show that there exists a $\delta$ for which we produce a $u_k$ close enough to the $\Proxphi^m$ via Algorithm~\ref{alg:inex.M.prox}. 
\begin{lemma}\label{app:lemma.tech}
Algorithm~\ref{alg:inex.M.prox} converges in finitely many iterations.  Moreover, if $\alpha_1 \le \sqrt{2}$ and Algorithm~\ref{alg:inex.M.prox} exits at iteration $\ell$, i.e., 
\begin{equation}\label{eq:lemma.tech.1}
  \|u_\ell - u_{\ell+1}\|_a \le \varepsilon,
\end{equation}
then the following error bound holds
\[
  \|u_\ell-\Proxrphi(x)\|\le \alpha_1^{-1/2}\left(1-\tfrac{1}{2}\alpha_1^{-2}\right)^{-1}\varepsilon.
\]
\end{lemma}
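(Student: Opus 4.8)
The plan is to recognize Algorithm~\ref{alg:inex.M.prox} as the $a$-weighted forward--backward (proximal gradient) iteration for minimizing $\varphi$, and then to combine a nonexpansiveness estimate with the standard ``residual controls the error'' bound for a contractive fixed-point map. Writing $g(\cdot)=\tfrac{1}{2r}\|\cdot-x\|^2$ so that $\varphi=g+\phi$, the $a$-gradient of $g$ is $\nabla_a g(u)=\tfrac1r A^{-1}(u-x)$, so one pass of the loop is exactly
\[
  u_{\ell+1}=T(u_\ell),\qquad T(u)\coloneqq\Proxrphi^a\bigl(u-A^{-1}(u-x)\bigr)=\Proxrphi^a\bigl(u-r\nabla_a g(u)\bigr),
\]
i.e.\ forward--backward with step $r$. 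Since $\varphi$ is strongly convex it has the unique minimizer $u^\star\coloneqq\Proxrphi(x)$ (the same point in any equivalent metric, as it minimizes the fixed functional $\varphi$), and $u^\star$ is characterized as the unique fixed point of $T$. Finite termination then follows immediately from the convergence already quoted from \cite[Corollary~28.9]{bauschke2017monotone}: under the standing assumptions $u_\ell\to u^\star$ strongly, so $\|u_\ell-u_{\ell+1}\|_a\le\|u_\ell-u^\star\|_a+\|u^\star-u_{\ell+1}\|_a\to0$, whence the stopping test $\|u_\ell-u_{\ell+1}\|_a\le\varepsilon$ is met after finitely many iterations.

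For the error bound I would show that $T$ is a $q$-contraction in $\|\cdot\|_a$. Setting $e_\ell\coloneqq u_\ell-u^\star$ and evaluating firm nonexpansiveness of the weighted proximity operator at the gradient-step images of $u_\ell$ and of $u^\star$ (recall $T u^\star=u^\star$), I obtain
\[
  \|e_{\ell+1}\|_a^{2}\le a\bigl(e_{\ell+1},\,(I-A^{-1})e_\ell\bigr)\le \|e_{\ell+1}\|_a\,\|(I-A^{-1})e_\ell\|_a ,
\]
so that $\|e_{\ell+1}\|_a\le q\,\|e_\ell\|_a$ with $q\coloneqq\|I-A^{-1}\|_a$, the Lipschitz modulus in the $a$-norm of the weighted gradient step $u\mapsto u-A^{-1}(u-x)$.

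With a contraction in hand, the decomposition $e_\ell=(u_\ell-u_{\ell+1})+(T u_\ell-T u^\star)$ gives the residual estimate
\[
  \|e_\ell\|_a\le\|u_\ell-u_{\ell+1}\|_a+q\|e_\ell\|_a,
  \qquad\text{hence}\qquad
  \|u_\ell-u^\star\|_a\le\frac{1}{1-q}\,\|u_\ell-u_{\ell+1}\|_a\le\frac{\varepsilon}{1-q},
\]
using \eqref{eq:lemma.tech.1}. Coercivity yields $\alpha_1\|w\|^2\le\|w\|_a^2$, i.e.\ $\|w\|\le\alpha_1^{-1/2}\|w\|_a$, and therefore $\|u_\ell-\Proxrphi(x)\|\le\alpha_1^{-1/2}(1-q)^{-1}\varepsilon$, which is exactly the claimed bound once $q=\tfrac12\alpha_1^{-2}$ is established.

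The crux, and the step I expect to be the \emph{main obstacle}, is the spectral estimate $q=\|I-A^{-1}\|_a\le\tfrac12\alpha_1^{-2}$ with $q<1$. Because $I-A^{-1}$ is $a$-self-adjoint and commutes with $A$, the substitution $v=A^{1/2}w$ shows its $a$-operator norm equals its ordinary operator norm $\max_{\lambda\in\mathrm{spec}(A)}|1-\lambda^{-1}|$, while coercivity and continuity place $\mathrm{spec}(A)\subseteq[\alpha_1,\alpha_2]$ and give $\|A^{-1}\|_a\le\alpha_1^{-1}$. Bounding this spectral quantity by $\tfrac12\alpha_1^{-2}$, and simultaneously guaranteeing $q<1$ so that $(1-q)^{-1}>0$, is precisely where the hypothesis $\alpha_1\le\sqrt2$ must be invoked; concretely I would reduce the claim to a one-variable inequality of the form $|1-\lambda^{-1}|\le\tfrac12\lambda^{-2}$ over the admissible spectral range and verify that it is implied by the stated constraint on $\alpha_1$. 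Everything else---identifying the iteration, nonexpansiveness, the residual bound, and the norm conversion---is routine once this estimate is in place.
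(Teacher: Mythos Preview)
Your framework---identify $T$ as the $a$-weighted forward--backward map, obtain finite termination from strong convergence, and convert the residual bound into an error bound via $\|u_\ell-u^\star\|_a\le(1-q)^{-1}\|u_\ell-u_{\ell+1}\|_a$---is sound and mirrors the paper's overall shape. The gap is precisely where you flag it: the identification $q=\|I-A^{-1}\|_a=\tfrac12\alpha_1^{-2}$ is false, and your proposed one-variable inequality cannot be rescued by the hypothesis $\alpha_1\le\sqrt2$.

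Concretely, the $a$-operator norm you need is $q=\max\{|1-\alpha_1^{-1}|,|1-\alpha_2^{-1}|\}$, which depends on $\alpha_2$, whereas the target constant $\tfrac12\alpha_1^{-2}$ does not. In the paper's own numerical setting $\alpha_1=1$, $\alpha_2=3$, one gets $q=2/3>1/2=\tfrac12\alpha_1^{-2}$. Your pointwise inequality $|1-\lambda^{-1}|\le\tfrac12\lambda^{-2}$ is equivalent to $|\lambda(\lambda-1)|\le\tfrac12$, which fails for all $\lambda>(1+\sqrt3)/2\approx1.37$; since $\alpha_1\le\sqrt2$ imposes no upper bound on $\alpha_2$, the spectral range is not constrained to this interval. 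So bounding the forward step $I-A^{-1}$ in operator norm cannot deliver the stated constant; at best you get a weaker, $\alpha_2$-dependent bound, and if $\alpha_1\le\tfrac12$ you may not even get $q<1$.

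The paper avoids this by working with the proximal-gradient \emph{residual} operator
\[
  G_a(y,r)=\tfrac1r\bigl(y-\Prox_{r\phi}^a(y-A^{-1}(y-x))\bigr)=\tfrac1r\bigl(y-T(y)\bigr)
\]
and invoking its strong monotonicity in the $a$-inner product (cited from \cite{baraldi2024local}) with constant $\tfrac1r(1-\tfrac12\alpha_1^{-2})$; the condition $\alpha_1\le\sqrt2$ is exactly what places the fixed step $t=r$ in the admissible range $(0,2r/\alpha_1^2)$. Since $G_a(p,r)=0$ at $p=\Proxrphi(x)$, strong monotonicity plus Cauchy--Schwarz give directly
\[
  (1-\tfrac12\alpha_1^{-2})\,\|u_\ell-p\|_a\le\|u_\ell-u_{\ell+1}\|_a,
\]
and the norm conversion finishes. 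The point is that strong monotonicity of $G_a$ exploits the firm nonexpansiveness of the prox and the $a$-strong convexity of $g$ \emph{together}, yielding an $\alpha_2$-free constant; your decoupled bound on the forward step alone cannot.
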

\begin{proof}
By \cite[Corollary~28.9]{bauschke2017monotone}, $u_\ell$ converges strongly and therefore there exists $\ell$ for which \eqref{eq:lemma.tech.1} holds for given $\epsilon>0$.
Denote the proximal gradient operator associated with the $a$-inner product by
\[
  G_a(y,t) = \tfrac{1}{t}\left(y-\Prox_{t\phi}^a\left(y-\tfrac{t}{r}A^{-1}(y-x)\right)\right)
\]
and note that $y\mapsto G_a(y,t)$ is strongly monotone for all $t\in(0,2r/\alpha_1^2)$
\cite[Lemma~2]{baraldi2024local}.
Consequently, $y\mapsto G_a(y,r)$ is strongly monotone since $\alpha_1\le\sqrt{2}$ and
\[
  a(G_a(u_\ell,r)-G_a(p,r),u_\ell-p) \ge \tfrac{1}{r}\left(1-\tfrac{1}{2}\alpha_1^{-2}\right)\|u_\ell-p\|_a,
\]
where $p=\Proxrphi(x)$.  The optimality of $p$ for $\varphi$ ensures that $G_a(p,r)=0$ and so
\[
  \tfrac{1}{r}\left(1-\tfrac{1}{2}\alpha_1^{-2}\right)\|u_\ell-p\|_a \le \|G_a(u_\ell,r)\|_a = \tfrac{1}{r}\|u_{\ell+1}-u_\ell\|_a.
\]
The result then follows from \eqref{eq:lemma.tech.1} and the equivalence of $\|\cdot\|_a$ and $\|\cdot\|$.
\end{proof}

%In the following lemma, we show that the error in the inexact proximity operator can be forced to zero under certain assumptions on the bilinear form $a$.
%One of the main steps in Algorithm~\ref{alg:cap2} the guarantee that equation~\eqref{eq:h.inexactness} holds under a finite number of steps. We prove this result in our next lemma, where we used our new subroutine, Algorithm~\ref{alg:inex.M.prox}, to 
%\begin{lemma}
%There exists $\delta>0$ such that Algorithm~\ref{alg:inex.M.prox}, with input $(x,g,r)=(x_{k},\nabla f_{k},r)$, yields $u_{\ell}$ satisfying equation~\eqref{eq:h.inexactness} under a finite number of iterations.
%\end{lemma}

%\begin{proof}
%    First, note that
%    \begin{align*}
%        |\tilde h_k-h_k|&=| \|\tilde u_\ell-x_k\|-\|\Proxrphi^M(x_k-rg_k)-x_k\|| \\
%        &\leq \|u_\ell-\Proxrphi^M(x_k-rg_k)\|,
%    \end{align*}
%where we applied the reverse of triangular inequality in the second line. We conclude by applying Lemma~\ref{app:lemma.tech}.
%\end{proof}

In Theorem~\ref{thm:inexact.M.prox}, we demonstrate that Algorithm~\ref{alg:inex.M.prox} applied to $x_k-r g_k$ outputs $u_{\ell+1}$ satisfying the following inequality
\[
\begin{aligned}
  \inp{g_k}{u_{\ell+1}-x_k}&+\tfrac{1}{2r}\|u_{\ell+1} - x_k\|^2 + \phi(u_{\ell+1}) \\
  &\le \inp{g_k}{z-x_k}+\frac{1}{2r}\|z - x_k\|^2 + \phi(z) +\delta\|z-u_{\ell+1}\|
\end{aligned}
\]
for every $z\in X$, or equivalently
\[
u_{\ell+1}\in \Proxrphi(x_k-rg_k,\delta). 
\]
Before presenting the main theorem of this section, we prove the following preliminary result, which we use to facilitate the proof of our main result.

\begin{lemma}\label{lem:M.weight.1}
Consider the sequence $\{u_\ell\}$ generated by Algorithm~\ref{alg:inex.M.prox}. Then,
\begin{equation}\label{eq:M.weight.1}
\frac{1}{r}(A-I)(u_\ell-u_{\ell+1}) \in \partial\varphi(u_{\ell+1}),
\end{equation}
which by definition is equivalent to
\begin{align}
\varphi(z) 
&\ge \varphi(u_{\ell+1}) + \frac{1}{r}\inp{(A-I)(u_\ell-u_{\ell+1})}{z-u_{\ell+1}}
\quad\forall\,z\in X.
\label{eq:diff}
\end{align}
\end{lemma}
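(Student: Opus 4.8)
The plan is to obtain the inclusion \eqref{eq:M.weight.1} directly from the first-order optimality condition satisfied by the $a$-weighted proximity operator, and then read off \eqref{eq:diff} for free from convexity. Write $w_\ell \coloneqq u_\ell - A^{-1}(u_\ell - x)$ so that, by line~4 of Algorithm~\ref{alg:inex.M.prox}, $u_{\ell+1} = \Proxrphi^a(w_\ell)$. Since $u_{\ell+1}$ minimizes $y \mapsto \tfrac{1}{2r}\|y - w_\ell\|_a^2 + \phi(y)$ and the Fr\'echet derivative of $\tfrac{1}{2r}\|\cdot - w_\ell\|_a^2$ at $y$ (as a Riesz representer) is $\tfrac{1}{r}A(y - w_\ell)$, the optimality condition reads
\[
  \tfrac{1}{r}A(w_\ell - u_{\ell+1}) \in \partial\phi(u_{\ell+1}).
\]

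First I would substitute the definition of $w_\ell$ and simplify the subgradient, using $A A^{-1} = I$, to get
\[
  A(w_\ell - u_{\ell+1}) = A(u_\ell - u_{\ell+1}) - (u_\ell - x).
\]
Next, recalling $\varphi(\cdot) = \tfrac{1}{2r}\|\cdot - x\|^2 + \phi(\cdot)$, whose subdifferential is $\partial\varphi(y) = \tfrac{1}{r}(y - x) + \partial\phi(y)$, I would add $\tfrac{1}{r}(u_{\ell+1} - x)$ to both the subgradient of $\phi$ above and its containing set. The key observation is that the two affine terms combine,
\[
  \tfrac{1}{r}(u_{\ell+1} - x) + \tfrac{1}{r}\bigl[A(u_\ell - u_{\ell+1}) - (u_\ell - x)\bigr] = \tfrac{1}{r}(A - I)(u_\ell - u_{\ell+1}),
\]
because the $-x$ contributions cancel and $u_{\ell+1} - u_\ell = -(u_\ell - u_{\ell+1})$. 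This is exactly $\tfrac{1}{r}(A-I)(u_\ell - u_{\ell+1}) \in \partial\varphi(u_{\ell+1})$, i.e.\ \eqref{eq:M.weight.1}.

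Finally, because $\varphi$ is convex (a convex quadratic plus the convex $\phi$), the membership \eqref{eq:M.weight.1} is, by the very definition of the convex subdifferential, equivalent to the inequality \eqref{eq:diff} with subgradient $\tfrac{1}{r}(A-I)(u_\ell - u_{\ell+1})$; no additional argument is required there.

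The main obstacle is purely careful bookkeeping between the two inner products. Because the quadratic penalty defining $\Proxrphi^a$ is measured in $\|\cdot\|_a$, differentiating it introduces the weight $A$, whereas $\partial\phi$ and $\partial\varphi$ are taken with respect to the standard pairing under the Riesz identification of $X^*$ with $X$. The crux is the cancellation of the $(u_\ell - x)$ term against the $x$-shift coming from $\partial\varphi$, which is precisely what collapses the weighted gradient step into the clean operator $A - I$ acting on the increment $u_\ell - u_{\ell+1}$.
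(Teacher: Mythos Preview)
Your proof is correct and follows essentially the same route as the paper's: both derive the optimality condition for the $a$-weighted prox, substitute $w_\ell = u_\ell - A^{-1}(u_\ell - x)$, and then add $\tfrac{1}{r}(u_{\ell+1}-x)$ to pass from $\partial\phi$ to $\partial\varphi$, with the resulting cancellation yielding $(A-I)(u_\ell-u_{\ell+1})$. The only cosmetic difference is that the paper invokes Theorem~\ref{thm:delta.Prox.Thm} to write the optimality condition as an inclusion in $A^{-1}\partial\phi(u_{\ell+1})$ and multiplies through by $A$ at the end, whereas you differentiate the $a$-quadratic directly in the ambient inner product and land in $\partial\phi(u_{\ell+1})$ from the start.
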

\begin{proof}
Recall that $u_{\ell+1} = \Proxrphi^a(u_\ell-A^{-1}(u_\ell-x))$.
By Theorem~\ref{thm:delta.Prox.Thm}, we have that
\begin{align*}
\frac{1}{r}(u_\ell-u_{\ell+1}-A^{-1}(u_\ell-x))& \in \partial_A \phi(u_{\ell+1}) = A^{-1}\partial\phi(u_{\ell+1})
\end{align*}
and by adding $(1/r)A^{-1}(u_{\ell+1}-x)$ on both sides of the previous set inclusion (in the Minkowski sense), we obtain
\begin{align*}
\frac{1}{r}(u_\ell-u_{\ell+1}-A^{-1}(u_{\ell}-u_{\ell+1}))& \in \frac{1}{r}A^{-1}(u_{\ell+1}-x)+ A^{-1}\partial\phi(u_{\ell+1})\\
%&=D^{-1}M\partial_M\left[\frac{1}{2r}\|u_{k+1}-(x-rg)\|^2+\phi(u_{k+1}) \right]\\
&=A^{-1}\partial\varphi(u_{\ell+1}).
\end{align*}
This is equivalent to \eqref{eq:M.weight.1}, concluding the proof. 
\end{proof}

Finally, we present our main result, which shows that if $u_{\ell+1}$ satisfies the stopping conditions of Algorithm~\ref{alg:inex.M.prox}, then it is a $\delta$-proximity operator. 
\begin{theorem}\label{thm:inexact.M.prox}
If $u_{\ell+1}$ satisfies the stopping condition \eqref{eq:lemma.tech.1} with
\[
  \varepsilon \le r\delta\sqrt{\alpha_1} / (1+\alpha_2),
\]
then $u_{\ell+1}\in \Proxrphi(x,\delta)$, i.e., for all $z\in X$,
\begin{align*}
\varphi(z) + \delta \|z-u_{\ell+1}\|
&\ge \varphi(u_{\ell+1}).
\end{align*}
\end{theorem}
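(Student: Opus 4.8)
The plan is to reduce the $\delta$-proximity inequality to a bound on a single inner-product term, which Lemma~\ref{lem:M.weight.1} supplies in exactly the right form. Applying Lemma~\ref{lem:M.weight.1} to the output $u_{\ell+1}$ of Algorithm~\ref{alg:inex.M.prox} gives the subgradient inequality
\[
  \varphi(z) - \varphi(u_{\ell+1}) \ge \tfrac{1}{r}\inp{(A-I)(u_\ell - u_{\ell+1})}{z-u_{\ell+1}}
  \qquad \forall\, z\in X.
\]
Comparing with Definition~\ref{def:inexactProximal}, it therefore suffices to show that the right-hand side is bounded below by $-\delta\norm{z-u_{\ell+1}}$ for every $z$; the target inequality $\varphi(u_{\ell+1}) \le \varphi(z) + \delta\norm{z-u_{\ell+1}}$ then follows at once, and by Theorem~\ref{thm:delta.Prox.Thm} certifies $u_{\ell+1}\in\Proxrphi(x,\delta)$.

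First I would control the inner product by Cauchy--Schwarz,
\[
  \tfrac{1}{r}\inp{(A-I)(u_\ell - u_{\ell+1})}{z-u_{\ell+1}}
  \ge -\tfrac{1}{r}\,\norm{(A-I)(u_\ell-u_{\ell+1})}\,\norm{z-u_{\ell+1}},
\]
so the whole argument collapses to the scalar estimate $\tfrac{1}{r}\norm{(A-I)(u_\ell-u_{\ell+1})}\le \delta$. Once this holds, the common factor $\norm{z-u_{\ell+1}}$ matches the term in Definition~\ref{def:inexactProximal} and the desired bound holds uniformly in $z$.

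The core of the proof is the operator/norm bookkeeping. I would bound $\norm{A-I}\le \norm{A}+1 \le 1+\alpha_2$, using that the continuity of $a(\cdot,\cdot)$ forces $\norm{A}\le \alpha_2$ (take $y=Ax$ in $\inp{Ax}{y}=a(x,y)\le\alpha_2\norm{x}\norm{y}$). Since the stopping condition~\eqref{eq:lemma.tech.1} is expressed in the $a$-norm whereas the inner product above sits in the ambient norm, I would reconcile the two via the coercivity bound $\sqrt{\alpha_1}\,\norm{w}\le\norm{w}_a$, which gives $\norm{u_\ell-u_{\ell+1}}\le \varepsilon/\sqrt{\alpha_1}$. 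Chaining these estimates yields
\[
  \tfrac{1}{r}\norm{(A-I)(u_\ell-u_{\ell+1})}
  \le \tfrac{1+\alpha_2}{r}\,\norm{u_\ell-u_{\ell+1}}
  \le \tfrac{1+\alpha_2}{r\sqrt{\alpha_1}}\,\varepsilon,
\]
and the hypothesis $\varepsilon \le r\delta\sqrt{\alpha_1}/(1+\alpha_2)$ makes the right-hand side at most $\delta$, closing the scalar estimate.

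The main obstacle I anticipate is purely in the constant bookkeeping: ensuring that the two distinct norms (the $a$-norm of the stopping test versus the ambient norm of the subgradient inequality) are reconciled with each equivalence inequality used in the correct direction, and that the crude triangle-inequality bound $\norm{A-I}\le 1+\alpha_2$ is the one producing exactly the factor $(1+\alpha_2)/\sqrt{\alpha_1}$ demanded by the hypothesis on $\varepsilon$. There is no analytic difficulty beyond Cauchy--Schwarz and the continuity bound on $A$; the substance of the result is that Lemma~\ref{lem:M.weight.1} converts the residual $u_\ell-u_{\ell+1}$ into an \emph{exact} element of $\partial\varphi(u_{\ell+1})$, so that a sufficiently small residual directly certifies membership in the $\delta$-subdifferential and hence in $\Proxrphi(x,\delta)$.
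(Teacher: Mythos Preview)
Your proof is correct and follows essentially the same route as the paper: invoke Lemma~\ref{lem:M.weight.1} to obtain the subgradient inequality, bound the inner product via Cauchy--Schwarz, and then establish the scalar estimate $\tfrac{1}{r}\norm{(A-I)(u_\ell-u_{\ell+1})}\le\delta$ using $\norm{A-I}\le 1+\alpha_2$, the norm equivalence $\sqrt{\alpha_1}\norm{w}\le\norm{w}_a$, and the hypothesis on $\varepsilon$. The only cosmetic difference is the order of presentation (the paper first records the scalar bound and then chains the three inequalities); your appeal to Theorem~\ref{thm:delta.Prox.Thm} at the end is unnecessary since the final inequality is already the defining condition of $\Proxrphi(x,\delta)$.
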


\begin{proof}
First note that the existence of $\alpha_1$ and $\alpha_2$ ensure that
\[
  \frac{1}{r}\|(A-I)(u_\ell-u_{\ell+1})\| \le \frac{1+\alpha_2}{r} \|u_\ell-u_{\ell+1}\| \le \frac{1+\alpha_2}{r\sqrt{\alpha_1}}\|u_\ell-u_{\ell+1}\|_a \le \frac{1+\alpha_2}{r\sqrt{\alpha_1}}\varepsilon \le \delta.
\]
Now, for any $z\in X$, we have that
\begin{align*}
\varphi(z) + \delta \|z-u_{\ell+1}\|&\ge \varphi(z) + \frac{1}{r}\|(A-I)(u_\ell-u_{\ell+1})\| \|z-u_{\ell+1}\| \\
&\ge \varphi(z) + \frac{1}{r}\inp{(A-I)(-u_\ell+u_{\ell+1})}{z-u_{\ell+1}}\\
&\ge \varphi(u_{\ell+1})
\end{align*}
where in the first line we applied the bound on $\delta$, in the second line we used the Cauchy-Schwarz inequality and for the final inequality we applied Lemma~\ref{lem:M.weight.1}.
\end{proof}

\section{Numerics}
\label{sec:numerics}

%\textcolor{blue}{1. Run Burgers control with "exact" PDE solves, but inexact L1 prox setting $kappa_grad = 1e-2, 1e-1, 1e0, 1e1, 1e2$.  Include table with iteration statistics (nfval, ngrad, nhess, nprox, etc).
%2. Run Burgers control with inexact PDE solves and L1 prox.  Include table with iteration history and objective/gradient tolerances.}

We apply our inexact weighted proximity operator within Algorithm~\ref{alg:cap} to solve optimal control of Burgers' equation:
\begin{equation}
    \min_{z\in L^2(\Omega)} \int_{\Omega}([S(z)]-w)^2(x)\dx + \frac{\alpha}{2}\int_{\Omega}z^2(x)\dx + \beta\int_{\Omega}|z|(x)\dx
\end{equation}
where $\Omega=(0,1)$ is the physical domain, $\alpha=10^{-4}$ and $\beta=10^{-2}$ are penalty parameters, $w(x)=-x^{2}$ is the target state, and $S(z)=u\in H^{1}(\Omega)$ solves the weak form of Burgers' equation
\begin{eqnarray*}
    -\nu u'' + uu' = z+f \quad \text{in } \Omega, \\
    u(0)=0, \quad u(1)=-1,
\end{eqnarray*}
where $f=2(\nu+x^3)$ and $\nu =0.08$. We discretize the state $u$ and $z$ using continuous piecewise linear finite elements on a uniform mesh with $n=512$ intervals.  To compute $S(z)$, we solve the discretized Burgers' equation using Newton's method globalized with a backtracking line search.  We exit the Newton iteration when the relative residual falls below $10^{-4}\sqrt{\epsilon_{\rm mach}}$, where $\epsilon_{\rm mach}$ is machine epsilon.  We will refer to PDE solves using this tolerance as ``exact'' PDE solves.

Given that the controls are in $L^2(\Omega)$, we employ the weighted dot product $\inp{\cdot}{\cdot}=\inp{\cdot}{\cdot}_M$, where $M\in\R^{n\times n}$ is the mass matrix and for Algorithm~\ref{alg:inex.M.prox}, we employ the dot product $a(\cdot,\cdot)=\inp{\cdot}{\cdot}_D$ weighted by the lumped mass matrix $D\in\R^{n\times n}$, i.e.,
\[
M=\frac{h}{6}\begin{pmatrix}
4 & 1 & \ldots & 0 & 0\\
1 & 4 & \ldots & 0 & 0\\
\vdots & \vdots & \ddots & \vdots &\vdots\\
0 & 0 & \ldots & 4 & 1\\
0 & 0 & \ldots & 1 & 4\\
\end{pmatrix} \in \mathbb{R}^{n\times n}\quad \text{and}\quad D=h\begin{pmatrix}
\frac{5}{6} & 0 & \ldots & 0 & 0\\
0 & 1 & \ldots & 0 & 0 \\
\vdots & \vdots & \ddots & \vdots & \vdots\\
0 & 0 & \ldots & 1 & 0 \\
0 & 0 & \ldots & 0 & \frac{5}{6} \\
\end{pmatrix}\in \mathbb{R}^{n\times n}.
\]
Recall that the $A$ operator generated by the bilinear form $a(\cdot,\cdot)$ is $M^{-1}D$. 
 For this $A$, $\alpha_1=1$ and $\alpha_2=3$.  Consequently, the preceding theory applies.
Furthermore, we approximate the $L^1$-norm by the quantity
\[
  \beta\int_\Omega |z|(x)\dx \approx \phi(z)=\beta h(\tfrac{5}{6} |z_1| + |z_2| + \ldots + |z_{n-1}| + \tfrac{5}{6}|z_n|).
\]
We notice that the $D$-weighted proximity operator of $\phi$ is the usual soft-thresholding operator
\[
  \Proxrphi^D(z) = \textup{sign}(z)\odot\max\{|z|-\beta r,0\}.
\]
\if(0)
\subsection{$L_1$-norm}
Assume that is the $L_1$-norm, i.e.,
\begin{align*}
    \phi(y):\mathbb{R}^n &\to \mathbb{R} \\
            y &\mapsto \|y\|_1\coloneqq \sum_{i=1}^{n}|y_i|
\end{align*}
where $y=(y_1,\ldots, y_n)$. Thus,
\begin{align*}
\Prox^D_{r \phi}(x) &= \Argmin_{y} \left\{\frac{1}{2r}\|y - x\|_D^2 + \phi(y)\right\} \\
&= \Argmin_{y} \left\{\frac{1}{2r}\sum_{i=1}^nd_{ii}(y_i-x_i)^2 + \sum_{i=1}^{n}|y_i|\right\} \\
&= \Argmin_{y} \left\{\sum_{i=1}^n\left[\frac{d_{ii}}{2r} (y_i-x_i)^2 + |y_i|\right]\right\} \\
& = (\ldots, \Argmin_{y_i\in \mathbb{R}} \left\{\frac{d_{ii}}{2r} (y_i-x_i)^2 + |y_i|\right\} ,\ldots )
\end{align*}

Now notice that
\begin{align*}
\Argmin \left\{ \frac{d_{ii}}{2r} (y_i-x_i)^2 + |y_i| \right\} &= \Argmin\left\{  \frac{1}{2} (y_i-x_i)^2 + \frac{r}{d_{ii}}|y_i| \right\} \\
&= \text{sign } (x_i)\max \left\{|x_i|-\frac{r}{d_{ii}} ,0\right\}
\end{align*}
\fi

In Table~\ref{tbl:1}, we summarize the performance of Algorithm~\ref{alg:cap} for different values of scaling ($\kgrad$) for the inexact gradient tolerance.
To approximately solve the trust-region subproblem \eqref{eq:tr-sub}, we employ the nonlinear conjugate gradient solver introduced in \cite[Algorithm~4]{baraldi2024efficient}.
The table includes the wallclock time in seconds ({\tt time (s)}), the number of trust-region iterations ({\tt iter}), the number of evaluations of $f$ ({\tt obj}), the number of evaluations of $\nabla f$ ({\tt grad}), the number of applications of the Hessian $\nabla^2 f$ ({\tt hess}), the number of evaluations of the proximity operator ({\tt prox}), and the average number of iterations of Algorithm~\ref{alg:inex.M.prox} ({\tt av-piter}).  As $\kgrad$ decreases, we require additional accuracy from our approximate proximity operator as computed by Algorithm~\ref{alg:inex.M.prox}.  As seen in Table~\ref{tbl:1}, our implementation of Algorithm~\ref{alg:cap} is robust to inexact proximity operators.
\begin{table}[!ht]
\centering
\begin{tabular}{l r r r r r r r}
$\kgrad$ & {\tt time (s)} & {\tt iter} & {\tt obj} & {\tt grad} & {\tt hess} & {\tt prox} & {\tt av-piter} \\
   \hline
 {\tt  1e2} & {\tt 0.4708} & {\tt 18} & {\tt 37} & {\tt 19} & {\tt 173} & {\tt 347} & {\tt  10.78} \\ 
 {\tt  1e1} & {\tt 0.4447} & {\tt 16} & {\tt 33} & {\tt 17} & {\tt 141} & {\tt 281} & {\tt  16.63} \\ 
 {\tt  1e0} & {\tt 0.5229} & {\tt 18} & {\tt 37} & {\tt 19} & {\tt 173} & {\tt 347} & {\tt  28.50} \\ 
 {\tt 1e-1} & {\tt 0.4247} & {\tt 15} & {\tt 31} & {\tt 16} & {\tt 125} & {\tt 248} & {\tt  47.07} \\ 
 {\tt 1e-2} & {\tt 0.5815} & {\tt 17} & {\tt 35} & {\tt 19} & {\tt 157} & {\tt 315} & {\tt  60.00} \\ 
 {\tt 1e-3} & {\tt 0.3498} & {\tt 13} & {\tt 27} & {\tt 15} & {\tt  93} & {\tt 183} & {\tt  53.15} \\ 
 {\tt 1e-4} & {\tt 0.4034} & {\tt 13} & {\tt 27} & {\tt 17} & {\tt  93} & {\tt 185} & {\tt  69.31} \\
 %{\tt 1e-5} & {\tt 0.5996} & {\tt 18} & {\tt 37} & {\tt 25} & {\tt 173} & {\tt 353} & {\tt 191.22} \\
 \hline
\end{tabular}
\caption{Results for the Burgers' equation for $\kgrad\in\{100,10,0.1,0.01,0.001,0.0001\}$}
\label{tbl:1}
\end{table}

To conclude, we incorporate inexact PDE solves with our inexact proximity operator computations.  In particular, we terminate the Newton iteration for solving Burgers' equation when the relative residual falls below
\[
  \min\{10^{-2},\tau\},
\]
where $\tau$ is computed by the trust-region algorithm and corresponds to the right-hand sides in Assumptions~\ref{assump:inexact.object} and \ref{assump:inexact.gradient}.  We do this fixing to $\kgrad=1$ and choosing $\kobj=10^3$.  Since the dominant cost of Newton's method is the linear system solves at each iteration, we compare the average number of linear system solves per iteration between our runs with exact and inexact PDE solves.  In particular, our method averaged 5.3125 linear system solves per trust-region iteration when using inexact PDE solves, compared with 7.7222 linear system solves when using exact PDE solves.  The reduction in linear system solves is a byproduct of our relaxed tolerances.  In particular, many of the PDE solves{---the primary cost in PDE-constrained optimization---}only require the relative residual to be smaller than $10^{-2}$.  {Relaxing the PDE solver tolerance has the potential to make previously intractable problems solvable.}

\section{Conclusion and Future Work}

In this work, we introduced an inexact proximity operator---motivated by the $\delta$-Fr\'{e}chet subdifferential---for use within the inexact trust-region algorithm from \cite{Baraldi2023}.  Additionally, we extended the inexact trust-region algorithm, Algorithm~\ref{alg:cap}, from \cite{Baraldi2023}, to leverage our inexact proximity operator. Our numerical results suggest that our algorithm is robust to inexact proximity operator evaluations.
As future research, we hope to develop similar theory that extends beyond the weighted proximity operator problem studied here.

%\newpage
\bibliographystyle{siam}
% Change this to your first and last name (and save the bib file appropriately)
\bibliography{LeandroFariasMaia.bib}

\end{document}